\documentclass[12pt,reqno]{amsart}

\headheight=6.15pt \textheight=8.75in \textwidth=6.5in
\oddsidemargin=0in \evensidemargin=0in \topmargin=0in

\usepackage{latexsym}
\usepackage{amssymb}
\usepackage{graphics}
\usepackage{graphicx}
\usepackage{color}

\renewcommand{\epsilon}{\varepsilon}

\newcommand{\kahler}{K\"ahler }
\newcommand{\ma}{Monge-Amp\`ere }

\newcommand{\PP}{{\mathbb P}}

\newcommand{\R}{{\mathbb R}}
\newcommand{\C}{{\mathbb C}}

\newcommand{\CP}{\C\PP}

\newcommand{\dbar}{\bar\partial}
\newcommand{\ddbar}{\partial\dbar}

\newcommand{\MA}{{\operatorname{MA}}}
\newcommand{\hilb}{{\operatorname{Hilb}}}

\newcommand{\diag}{{\operatorname{diag}}}

\renewcommand{\phi}{\varphi}

\newcommand{\acal}{\mathcal{A}}
\newcommand{\bcal}{\mathcal{B}}

\newcommand{\dcal}{\mathcal{D}}

\newcommand{\fcal}{\mathcal{F}}

\newcommand{\hcal}{\mathcal{H}}

\newtheorem{theo}{{Theorem}}[section]

\newenvironment{example}{\medskip\noindent{\it Example:\/} }{\medskip}
\newenvironment{rem}{\medskip\noindent{\it Remark:\/} }{\medskip}

\newenvironment{ques}{\medskip\noindent{\it Question:\/} }{\medskip}

\title[image of the Hilbert map]{on the image of the Hilbert map}

\author{Jingzhou Sun }
\address{Department of Mathematics, Shantou University, Shantou City, Guangdong
	515063,China} \email{jzsun@stu.edu.cn}
\thanks{The author is partially supported by NNSF of China no.11701353.}

\date{\today}

\begin{document}

	\begin{abstract}
		We talk about the image of the Hilbert map, in particular, we show the necessary and sufficient condition for the Hilbert map to be surjective.
	\end{abstract}

	\maketitle
	
	\tableofcontents
	\section{Introduction}
	
	Let $(X,L)$ be a polarized \kahler manifold of dimension $n$. The space $\hcal$, consisting of all hermitian metrics on $L$ with positive curvature, is a central playground for many analytic geometers, especially for those who are interested in the extremal metrics. Following the ideas of quantization of \kahler metrics by Donaldson and others, the Hilbert map is often considered. More precisely, let $\bcal$ denote the space of hermitian inner products on $H^0(X,L)$, the Hilbert map $\hilb:\hcal\to \bcal$ is defined as follows: $\forall h\in \hcal$,
	$$\langle s_1,s_2 \rangle_{\hilb(h)}=\int_X (s_1,s_2)_h \omega^n/n!,  $$
	where $\omega=\frac{i}{2}\Theta_h$. More generally, let $\bcal_k$ denote the space of hermitian inner products on $H^0(X,L^k)$, we have the Hilbert map $\hilb_k:\hcal\to \bcal_k$.
	In some sense, the images of the Hilbert maps $\hilb_k$ can be seen as the shadows of the complicated infinite-dimensional space $\hcal$.
	It is believed by many	\cite{Hashimoto} that when $L$ is very ample the Hilbert map $\hilb$ is surjective. But this is not always true. Actually, 
	The following arguments show that in most cases, the image of the Hilbert map is a proper subset of $\bcal$. Let $\{s_1,\cdots,s_m\}\subset H^0(X,L)$ be a fixed basis. Then $\bcal$ can be identified with the space of $m\times m$ positive definite hermitian matrices. And then $\hilb(h)$ is a hermitian matrix with entries:
	$$(\hilb(h))_{ij}=\int_X (s_i,s_j)_h \omega^n/n!$$
	Recall that a set of sections $I\subset  H^0(X,L)$ is said to generate the line bundle $L$ if $\forall p\in X$, $\exists s\in I$ such that $s(p)\neq 0$. Assume that $\{s_1,\cdots,s_{m-1}\}$ generates $L$, then
	$$M=\max_{p\in X}\frac{|s_m(p)|^2_h}{\sum_{i=1}^{m-1}|s_i(p)|^2_h}<\infty,$$
	which is independent of the metric $h$. This implies that
	$$(\hilb(h))_{mm}<M\sum_{i=1}^{m-1}(\hilb(h))_{ii}.$$
	Therefore, if $H^0(X,L)$ is more than just generating $L$, which is the case for most very ample line bundle $L$, then the Hilbert map is not surjective. Professor Hashimoto has acknowledged this misunderstanding and is preparing an erratum to \cite{Hashimoto}.
	
	\
	
Naturally, the next question is what is the image of the Hilbert map? From our previous argument, it is easy to see that we have the following constraints. Let $\{s_1,\cdots,s_l\}\subset H^0(X,L)$ be a set of sections generating $L$, and let $\{\sigma_1,\cdots,\sigma_q\}\subset H^0(X,L)$ be any finite set of sections. Then
\begin{equation}\label{max}
	M=\max_{p\in X}\frac{\sum_{i=1}^{q}|\sigma_i(p)|^2}{\sum_{i=1}^{l}|s_i(p)|^2}<\infty.
\end{equation}

So
\begin{equation}\label{constraints}
\sum_{i=1}^{q}|\sigma_i|^2_{\hilb(h)}<M\sum_{i=1}^{l}|s_i|^2_{\hilb(h)},
\end{equation}
for any $h\in \hcal$. More generally, if $\{s_1,\cdots,s_l\}$ not necessarily generate $L$, then the local representations of $\{s_1,\cdots,s_l\}$ generate an ideal sheaf $I$. Let $J$ be the ideal sheaf generated by $\{\sigma_1,\cdots,\sigma_q\}$. Then if $J\subset I$, we still have inequalities (\ref{max}) and (\ref{constraints}).
 If we regard $\bcal$ as an open subset of the space of $m\times m$ hermitian matrices which is identified with $\R^{m^2}$, then each of inequalities (\ref{constraints}) defines a half space of $\R^{m^2}$. We let $\acal\subset \bcal$ denote the subset defined by all these inequalities. Clearly, the image of the Hilbert map is contained in $\acal$. We propose the following

 \begin{ques}
 	Is $\acal$ the image of the Hilbert map?
 \end{ques}
	
	\
	
We tend to think that the answer is Yes. Our first result gives some evidence to this tendency. To state our result, we first fix some notations. Let $V\subset H^0(X,L)$ be a subspace.  Let $\bcal_V$ denote the space of hermitian inner products on $V$. Then composing the restriction, the Hilbert map gives
 $$\hilb_V: \hcal\to \bcal_V.$$
\begin{theo}\label{main}
Let $V\subset H^0(X,L)$ be a subspace that generates $L$, which is minimal in the sense that no proper subspace of $V$ generates $L$. Then $\hilb_V$ is surjective.	
\end{theo}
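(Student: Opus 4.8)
The plan is to reduce this infinite-dimensional surjectivity statement to a finite-dimensional, convex-geometric problem on $\PP(V^*)\cong\PP^{r-1}$ (with $r=\dim V$), exploiting the fact that the pointwise Gram matrices of a generating system have rank one. Fix a basis $t_1,\dots,t_r$ of $V$, so that $\bcal_V$ is the cone of positive-definite Hermitian $r\times r$ matrices and $\hilb_V(h)_{ij}=\int_X (t_i,t_j)_h\,\omega^n/n!$. At $p\in X$ choose a local frame $e_p$ of $L$ and write $t_i(p)=c_i(p)e_p$; then $(t_i,t_j)_h(p)=c_i(p)\overline{c_j(p)}\,\|e_p\|_h^2$, so the pointwise Gram matrix equals $\|e_p\|_h^2\,c(p)c(p)^*$, of rank exactly one because $V$ generates $L$. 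Writing $\Pi(\xi)$ for the orthogonal projection onto the line $\xi\in\PP^{r-1}$ and $\rho_h:=\sum_i|t_i|^2_h\ge 0$, the line $[c(p)]$ is precisely $\varphi_V(p)$, and integration gives the clean identity $\hilb_V(h)=\int_X \rho_h\,\Pi(\varphi_V(p))\,\omega^n/n!=\int_{\PP^{r-1}}\Pi\,d\mu_h$, where $\mu_h:=(\varphi_V)_*\!\big(\rho_h\,\omega^n/n!\big)$ is a positive measure on $\PP^{r-1}$.

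This factorization splits the theorem into two parts. For the linear map $\mu\mapsto\int_{\PP^{r-1}}\Pi\,d\mu$ on positive measures, the image is exactly the positive-semidefinite cone: by the spectral theorem any such matrix is $\sum_\alpha\lambda_\alpha u_\alpha u_\alpha^*=\int\Pi\,d\mu$ for the atomic measure $\mu=\sum_\alpha\lambda_\alpha\delta_{[u_\alpha]}$. This is exactly where minimality enters, and I would record it first: $V$ is minimal $\iff\varphi_V:X\to\PP^{r-1}$ is surjective (a hyperplane $W\subset V$ fails to generate iff it has a common zero $p$, i.e.\ iff $W=\ker\mathrm{ev}_p$, i.e.\ iff the point of $\PP(V^*)$ it defines lies in $\varphi_V(X)$). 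Surjectivity of $\varphi_V$ guarantees that every $\Pi(\xi)$ is of the form $\Pi(\varphi_V(p))$, so every positive-definite target $G$ is \emph{a priori} accessible. The remaining task is to realize, for a given $G$, an admissible measure $\mu=\mu_h$ with $\int\Pi\,d\mu=G$ coming from an honest $h\in\hcal$.

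For this realization I would show the map $h\mapsto\hilb_V(h)$ is open and proper onto $\bcal_V$ and conclude by connectedness. Openness comes from the implicit function theorem: every metric on the line bundle is $h_0e^{-2\psi}$, so tangent directions are real functions $f$, and integration by parts (using $d\omega^{n-1}=0$) gives $\frac{d}{dt}\big|_0\hilb_V(h e^{-2tf})_{ij}=\int_X f\big[\,i\partial\bar\partial(t_i,t_j)_h\wedge\tfrac{\omega^{n-1}}{(n-1)!}-2(t_i,t_j)_h\tfrac{\omega^n}{n!}\,\big]$. The differential is surjective unless there is a nonzero Hermitian $A=(a_{ij})$ with $\Delta_\omega Q_A=2Q_A$, where $Q_A:=\sum_{ij}a_{ij}(t_i,t_j)_h$ and $\Delta_\omega$ is the complex Laplacian of $\omega$. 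The point is that $Q_A(p)=\rho_h(p)\,\mathrm{tr}\!\big(A\,\Pi(\varphi_V(p))\big)$, so a sharp bump of $\psi$ over a point of a fiber $\varphi_V^{-1}(\xi)$ moves $\mu_h$ by approximately a point mass at $\xi$ and hence moves $\hilb_V(h)$ in the direction $\Pi(\xi)$; as $\xi$ ranges over all of $\PP^{r-1}$ (surjectivity again) these directions span the Hermitian matrices.

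I expect the genuine obstacle to be the global realization — equivalently, the behavior of $\hilb_V$ as $G$ approaches the boundary of the cone (properness/closedness). Excluding the exceptional eigenfunctions $\Delta_\omega Q_A=2Q_A$ at every solution, and obtaining the a priori estimates that prevent the solving metric from degenerating out of $\hcal$ while $G$ remains in a compact part of $\bcal_V$, is the analytically delicate step; the rank-one reduction and the surjectivity of $\varphi_V$ (the role of minimality) are what make every target attainable in principle and reduce the whole theorem to this estimate.
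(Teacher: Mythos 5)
Your first half is sound and in fact matches the way the paper exploits minimality: the identity $\hilb_V(h)=\int_{\PP(V^*)}\Pi\,d\mu_h$ with $\mu_h$ the pushforward of $\rho_h\,\omega_h^n/n!$ under $\varphi_V$, and the equivalence ``$V$ minimal $\iff$ $\varphi_V:X\to\PP(V^*)$ is surjective,'' are both correct (the paper uses the same fact in the form: for every unitary basis $\{s'_i\}$ of $V$ there is a point $p_i$ where $s'_i(p_i)\neq 0$ and $s'_j(p_i)=0$ for $j\neq i$). The genuine gap is exactly where you say it is, and the completion you sketch does not work as stated. Your openness step via the implicit function theorem fails whenever some nonzero Hermitian $A$ satisfies $\Delta_\omega Q_A=2Q_A$; this is precisely the spectral obstruction that forces the paper to assume ``$1$ is not an eigenvalue of $\Delta_\omega$'' in Theorem \ref{acal}, and the paper states explicitly that it does not know how to remove it. Your properness/closedness step is only named, not proved, and no a priori estimate is offered. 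So the load-bearing part of the argument is missing, and the open-plus-closed route would at best reprove a conditional version of the theorem.

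The idea that actually closes the gap is different and sidesteps both difficulties: the Aubin--Yau theorem in the form $\log\MA(\phi)-\phi=f$, uniquely solvable for \emph{every} $f\in C^\infty(X)$, says that $e^{-\phi}(\omega+i\ddbar\phi)^n$ can be prescribed to be an arbitrary smooth positive volume form $e^{f}\omega^n$. Since $\hilb_V(he^{-\phi})_{ij}=\int_X(t_i,t_j)_h\,e^{-\phi}(\omega+i\ddbar\phi)^n/n!$, the image of $\hilb_V$ is exactly the set of Gram matrices $\left(\int_X(t_i,t_j)_h\,d\mu\right)_{ij}$ as $\mu$ ranges over all smooth positive volume forms --- in your language, the admissible measures $\mu_h$ on $\PP(V^*)$ are the pushforwards of \emph{all} smooth positive volume forms. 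This linearizes the problem: the image is manifestly convex (Theorem \ref{thm-convex}), and letting $\mu$ concentrate at a point $p$ with $\varphi_V(p)=\xi$ puts every rank-one projection $\Pi(\xi)$, $\xi\in\PP(V^*)$, in the closure of the image inside $\bar{\bcal}_V$; by convexity, rescaling, and the spectral theorem the closure is all of $\bar{\bcal}_V$, and a convex subset of $\bcal_V$ whose closure is $\bar{\bcal}_V$ must equal $\bcal_V$. No linearization at a single metric, no spectral condition, and no compactness estimate is needed. To salvage your outline, replace ``open and proper'' by ``convex with dense closure in $\bar{\bcal}_V$,'' with Aubin--Yau supplying both the convexity and the approximation of point masses.
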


 \begin{rem}
 	As we discussed in the section for reduction of constraints, under the assumption on $V$ in the theorem, the constraints of the form (\ref{constraints}) are all redundant. In other words, when $\acal=\bcal$, $\acal$ is the image of the Hilbert map.
 \end{rem}

The general case becomes complicated due to the fact that there are too many constraints, which makes it is hard to decide which hermitian metric satisfies all the constraints. 
We will give an example in section \ref{sec-3} to give the readers a glance of the subtlety of the problem. This example also shows that it is not clear whether we can reduce the set of the constraints to a compact subset. So it is not clear if $\acal$ is open.

We recall that the space $\bcal$ has a natural convex structure, namely, given $H_0,H_1\in \bcal$, $tH_1+(1-t)H_0$ is in $\bcal$ for all $t\in [0,1]$. Clearly, $\acal$ is also convex.
\begin{theo}\label{thm-main}
	The image of $\hilb$ is a convex open subset of $\bcal$.
\end{theo}

Our last result describes the image of the Hilbert map from a different angle.
	
	\

Let $\bar{\bcal}$ be the space of semi-positive hermitian bilinear forms on $H^0(X,L)$, or by abuse of notation, the semi-positive hermitian inner products on $H^0(X,L)$. Given a hermitian metric $h$ on $L$, we have a map $P_h:X\to \bar{\bcal}$ defined by
$$\langle s_1,s_2 \rangle_{P_h(x)}=(s_1(x),s_2(x))_h$$	
We let $\fcal_h\subset \bar{\bcal}$ denote the closure of the subset consisting of linear combinations of the form $$\sum_{i=1}^{t}a_iP_h(x_i), t<+\infty, x_i\in X, a_i>0.$$
Clearly, $\fcal_h$ is the cone generated by the closure of the convex hull of $P_h(X)$.
It is not hard to see that $\fcal_h$ actually does not depend on the choice of $h$, so we will simply use $\fcal$. 
\begin{theo}\label{theo-points}
	The closure of the image of the Hilbert map in $\bar{\bcal}$ is $\fcal$
\end{theo}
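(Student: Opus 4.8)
The plan is to prove the two inclusions $\overline{\hilb(\hcal)}\subseteq\fcal$ and $\fcal\subseteq\overline{\hilb(\hcal)}$ separately, the closures being taken in the finite-dimensional space $\bar{\bcal}$. Fix once and for all a reference metric $h_0\in\hcal$ with \kahler form $\omega_0=\frac i2\Theta_{h_0}$, and write a general $h\in\hcal$ as $h=h_0 e^{-\phi}$, so that $\omega_\phi:=\frac i2\Theta_h=\omega_0+\frac i2\ddbar\phi>0$ and $(s_i,s_j)_h=(s_i,s_j)_{h_0}\,e^{-\phi}$. Writing $P_{h_0}$ for the map $P_h$ of the statement with $h=h_0$, the Hilbert map then acquires the integral representation
$$\hilb(h)=\int_X P_{h_0}(x)\,d\nu_\phi(x),\qquad \nu_\phi:=e^{-\phi}\,\frac{\omega_\phi^n}{n!},$$
where $\nu_\phi$ is a smooth positive measure on $X$. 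Since $\nu_\phi\ge 0$ and every $P_{h_0}(x)$ lies in $\fcal$, this represents $\hilb(h)$ as an integral of points of $P_{h_0}(X)$ against a finite positive measure, hence $\hilb(h)\in\fcal$; as $\fcal$ is closed, this already gives $\overline{\hilb(\hcal)}\subseteq\fcal$.

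For the reverse inclusion I would first record the soft convex-geometric fact that the set of integrals $\int_X P_{h_0}\,d\mu$ over finite positive measures $\mu$ on $X$ contains all finite positive combinations $\sum_i a_i P_{h_0}(x_i)$ (take atomic $\mu$) and is contained in $\fcal$, hence has closure exactly $\fcal$. Moreover smooth strictly positive volume forms are weak-$*$ dense among finite positive measures (mollify, then add a small multiple of $\omega_0^n$), and $\mu\mapsto\int_X P_{h_0}\,d\mu$ is weak-$*$ continuous into the finite-dimensional space $\bar{\bcal}$ because $x\mapsto P_{h_0}(x)$ is continuous and $X$ is compact. It therefore suffices to realize every integral $\int_X P_{h_0}\,d\nu$, with $\nu$ a smooth positive volume form, as a genuine value $\hilb(h)$, that is, to solve $\nu_\phi=\nu$ for some admissible $\phi$.

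This is the analytic heart of the argument. Rewriting $\nu_\phi=\nu$ as $\omega_\phi^n/n!=e^{\phi}\nu$, i.e.
$$\Big(\omega_0+\frac i2\ddbar\phi\Big)^n=e^{\phi}\,n!\,\nu,$$
we face a complex \ma equation with the \emph{favorable} sign on the zeroth-order term. For any smooth strictly positive volume form $\nu$ — with no constraint on its total mass, since the factor $e^{\phi}$ absorbs it and forces $\int_X e^{\phi}\nu=\vol(L)$ automatically — this equation is uniquely solvable for a smooth $\phi$ with $\omega_\phi>0$, by the classical theory of Aubin and Yau: the $C^0$ estimate is immediate from the maximum principle precisely because of the good sign, Yau's higher-order estimates then produce a smooth solution, and uniqueness again follows from the maximum principle. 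Consequently $\phi\mapsto\nu_\phi$ already surjects onto all smooth positive volume forms, so $\{\int_X P_{h_0}\,d\nu:\ \nu\text{ a smooth positive volume form}\}\subseteq\hilb(\hcal)$; taking closures yields $\fcal\subseteq\overline{\hilb(\hcal)}$, and combining with the first inclusion proves the theorem.

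The step I expect to be the genuine obstacle is exactly the solvability of the \ma equation in the generality needed, in particular the claim that \emph{every} smooth positive volume form of arbitrary total mass is attained as $\nu_\phi$; once this is granted, the remainder is approximation and elementary convex geometry. I would double-check that the a priori estimates are robust enough and that no hidden normalization of $\nu$ sneaks in, but since the coefficient of $\phi$ carries the coercive sign I expect the standard continuity-method argument to go through without difficulty.
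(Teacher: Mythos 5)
Your proof is correct, and for one of the two inclusions it is genuinely simpler than the paper's. For $\fcal\subseteq\overline{\hilb(\hcal)}$ both arguments are essentially the same Aubin--Yau argument: the paper approximates each Dirac mass $\delta_x$ by smooth volume forms and solves the \ma equation to land near $P_h(x)$, then invokes convexity of the closure of the image; you instead realize \emph{every} smooth positive volume form as $\nu_\phi$ (the equation $\log\MA(\phi)-\phi=f$ with arbitrary smooth $f$, exactly the form of Aubin--Yau quoted in the paper, so the arbitrary total mass is indeed no obstacle) and then use weak-$*$ density of such forms among atomic measures --- two packagings of the same idea. The real divergence is in the inclusion $\hilb(\hcal)\subseteq\fcal$: the paper approximates the smooth measure $e^{-\phi}\omega'^n$ by discrete measures via Shiffman--Zelditch equidistribution of zeros of random sections (for $n=1$) and an induction over smooth hypersurfaces (for $n>1$), whereas you simply observe that $\hilb(h)=\int_X P_{h_0}\,d\nu_\phi$ is a vector-valued integral of a continuous map on a compact manifold against a finite positive measure, hence a limit of Riemann sums $\sum\mu(U_i)P_{h_0}(x_i)$, each of which is a finite positive combination, so the integral lies in the closed cone $\fcal$ by definition. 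This elementary convexity observation entirely replaces the equidistribution machinery and the hypersurface induction, and is in my view a cleaner route; the paper's approach buys nothing extra here beyond illustrating a connection to random zero currents. No gaps.
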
	
\begin{rem}
If the image of the Hilbert map is $\acal$, then clearly the closure of $\acal$ in $\bar{\bcal}$ is $\fcal$. Conversely, if the closure of $\acal$ in $\bar{\bcal}$ is $\fcal$, then by theorem \ref{thm-convex}, $\hilb(\hcal)$ is a convex dense subset in $\acal$, hence the image of the Hilbert map being $\acal$.
\end{rem}

	\
	
	The structure of this article is as follows. We will first prove theorem \ref{main}, then we prove theorem \ref{theo-points}. Then after some discussions of the reduction of the linear constraints, we will prove theorem \ref{thm-main}.

	\
	
	\textbf{Acknowledgements.} The author would like to thank Professors Steve Zelditch,  Yoshinori Hashimoto and Siarhei Finski for their interest in this article.

	\

	\section{proof of the main results}

We fix a \kahler form $\omega$ on $X$. Let $\phi\in C^\infty(X)$, then the \ma equation is $$\MA(\phi)=\frac{(\omega+i\ddbar \phi)^n}{\omega^n}$$
	
	We recall the famous theorem by Aubin and Yau(see \cite{AubinMA} for example)
	\begin{theo}[Aubin-Yau]
		Given $f\in C^\infty(X)$, the \ma equation
		$$\log\MA(\phi )-\phi =f $$
		has an unique solution $\phi$, which is in $C^\infty(X)$, satisfying $\omega+i\ddbar \phi>0$
	\end{theo}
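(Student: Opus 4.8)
The plan is to solve the equation by the continuity method combined with a priori estimates, following Aubin and Yau. Write the equation as $L(\phi)=f$ with $L(\phi)=\log\MA(\phi)-\phi$, and embed it in the family
$$L(\phi)=tf,\qquad t\in[0,1].$$
At $t=0$ the function $\phi\equiv 0$ is a solution, since $\MA(0)=1$. Let $S\subset[0,1]$ be the set of $t$ for which $L(\phi_t)=tf$ admits a solution $\phi_t\in C^\infty(X)$ with $\omega+i\ddbar\phi_t>0$. I would show that $S$ is nonempty, open and closed, hence $S=[0,1]$; the endpoint $t=1$ then gives existence. I work throughout in the H\"older spaces $C^{k,\al}(X)$.

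Openness comes from the implicit function theorem. The linearization of $L$ at a solution $\phi$ is
$$L'_\phi(\psi)=\Delta_{\omega_\phi}\psi-\psi,$$
where $\omega_\phi=\omega+i\ddbar\phi$ and $\Delta_{\omega_\phi}\psi=\operatorname{tr}_{\omega_\phi}(i\ddbar\psi)$ is the (nonpositive) Laplacian of the metric $\omega_\phi$. The crucial feature of the zeroth-order term $-\phi$ is that it removes the kernel: $\Delta_{\omega_\phi}$ has spectrum in $(-\infty,0]$ with kernel the constants, so $\Delta_{\omega_\phi}-1$ has spectrum in $(-\infty,-1]$ and is an invertible self-adjoint elliptic operator, giving an isomorphism $C^{k+2,\al}\to C^{k,\al}$. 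The implicit function theorem then shows that $S$ is open.

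The heart of the matter is closedness, which reduces to uniform a priori estimates for the $\phi_t$. The $C^0$ estimate is immediate from the maximum principle and is again a gift of the $-\phi$ term: at a maximum point of $\phi_t$ one has $i\ddbar\phi_t\le 0$, hence $\MA(\phi_t)\le 1$ and $\phi_t=\log\MA(\phi_t)-tf\le\|f\|_{C^0}$, while the minimum is bounded below symmetrically, so $\|\phi_t\|_{C^0}\le\|f\|_{C^0}$. The main obstacle is the second-order (Laplacian) estimate. Here I would apply the maximum principle to a quantity of the form $\log(\operatorname{tr}_\omega\omega_{\phi_t})-A\phi_t$ for a large constant $A$, using the Aubin-Yau inequality in which the holomorphic bisectional curvature of the fixed metric $\omega$ enters, to obtain $\operatorname{tr}_\omega\omega_{\phi_t}\le C$, with $C$ depending only on $\|f\|_{C^2}$, the $C^0$ bound already established, and a lower bound for the bisectional curvature of $\omega$. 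Combined with the determinant identity $\MA(\phi_t)=e^{tf+\phi_t}$ read off from the equation and the $C^0$ bound, this forces $\omega_{\phi_t}$ to be uniformly equivalent to $\omega$, so the equation is uniformly elliptic along the family.

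Once uniform ellipticity is in hand, I would invoke the Evans-Krylov theorem to upgrade the $C^2$ bound to a uniform $C^{2,\al}$ bound, and then bootstrap: differentiating the equation and applying the Schauder estimates yields uniform $C^{k,\al}$ bounds for every $k$. By Arzel\`a-Ascoli these give $C^\infty$ convergence of solutions along any sequence $t_j\to t_0$ in $S$ to a solution at $t_0$, so $S$ is closed and hence equals $[0,1]$. Finally, uniqueness follows from the same maximum-principle mechanism: if $\phi_1,\phi_2$ both solve $L(\phi)=f$, then at a maximum of $\phi_1-\phi_2$ we have $\omega_{\phi_1}\le\omega_{\phi_2}$, whence $\MA(\phi_1)\le\MA(\phi_2)$ by monotonicity of the determinant, and the equation gives $(\phi_1-\phi_2)=\log\MA(\phi_1)-\log\MA(\phi_2)\le 0$ there; thus $\max(\phi_1-\phi_2)\le 0$, and by symmetry $\phi_1\equiv\phi_2$. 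I expect the $C^2$ estimate, together with the Evans-Krylov step needed to pass from $C^2$ to $C^{2,\al}$, to be the only genuinely delicate part of the argument.
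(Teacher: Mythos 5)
The paper offers no proof of this statement at all: it is quoted as the classical theorem of Aubin and Yau with a citation to \cite{AubinMA}, so there is nothing internal to compare against. Your continuity-method sketch is the standard proof of exactly this equation (the case where the zeroth-order term $-\phi$ has the favorable sign), and every step is sound: openness via invertibility of $\Delta_{\omega_\phi}-1$ on H\"older spaces, the $C^0$ bound straight from the maximum principle, the second-order estimate on $\log(\operatorname{tr}_\omega\omega_{\phi_t})-A\phi_t$ with the bisectional-curvature term, Evans--Krylov plus Schauder bootstrap for closedness (Aubin and Yau originally used Calabi's third-order estimate here, but Evans--Krylov applies since $\log\det$ is concave), and the maximum-principle uniqueness. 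The only point worth making explicit is that the positivity condition $\omega+i\ddbar\phi_t>0$ persists along the path: the equation forces $\MA(\phi_t)=e^{tf+\phi_t}>0$, so the eigenvalues of $\omega_{\phi_t}$ relative to $\omega$ never cross zero, and they are positive at $t=0$; this is implicit in, and also guaranteed by, your uniform-ellipticity step.
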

	\

\

\begin{theo}\label{thm-convex}
	The image of the Hilbert map is a convex subset of $\bcal$
\end{theo}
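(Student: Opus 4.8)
The plan is to reduce the convexity of $\hilb(\hcal)$ to the (obvious) convexity of the space of smooth positive measures on $X$, by using the Aubin--Yau theorem to show that the Hilbert map factors through the measure it induces. First I would fix a reference metric $h_0\in\hcal$ with curvature $\omega_0=\frac i2\Theta_{h_0}$ and parametrize $\hcal$ by \kahler potentials: every $h\in\hcal$ is uniquely $h=h_0 e^{-\phi}$ with $\omega=\frac i2\Theta_h=\omega_0+\frac i2\ddbar\phi>0$. Under this parametrization the entries of the Hilbert map become
$$\hilb(h)_{ij}=\int_X (s_i,s_j)_{h_0}\,e^{-\phi}\,\frac{\omega^n}{n!},$$
since the pointwise inner product scales as $(s_i,s_j)_h=(s_i,s_j)_{h_0}e^{-\phi}$. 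The key observation is that the twisted volume form $d\mu_\phi:=e^{-\phi}\frac{\omega^n}{n!}$ is the only place where $\phi$ enters, so $\hilb$ factors as $\phi\mapsto\mu_\phi\mapsto\big(\int_X(s_i,s_j)_{h_0}\,d\mu_\phi\big)_{ij}$, and the second arrow is linear in the measure $\mu_\phi$.

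Next I would identify the range of the assignment $\phi\mapsto\mu_\phi$. Taking $\omega_0$ as the background form in the \ma equation (after the normalization matching discussed below), the equation $\log\MA(\phi)-\phi=f$ is equivalent to $e^{-\phi}\omega^n=e^f\omega_0^n$, i.e. $\mu_\phi=e^f\frac{\omega_0^n}{n!}$. Hence, by the Aubin--Yau theorem, for every $f\in C^\infty(X)$ there is a unique admissible $\phi$ producing $\mu_\phi=e^f\frac{\omega_0^n}{n!}$; as $f$ ranges over $C^\infty(X)$ this realizes \emph{exactly} the smooth positive measures on $X$. It is worth stressing that the twisting factor $e^{-\phi}$ removes the usual total-mass (cohomological) constraint $\int\omega^n=\int\omega_0^n$, so no normalization is imposed on the total mass of $\mu_\phi$.

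Finally I would conclude by the convexity of measures. Given $H_0=\hilb(h_{\phi_0})$, $H_1=\hilb(h_{\phi_1})$ and $t\in[0,1]$, the convex combination $t\mu_{\phi_1}+(1-t)\mu_{\phi_0}$ is again a smooth positive measure, so by the previous step there is an admissible $\phi_t$ with $\mu_{\phi_t}=t\mu_{\phi_1}+(1-t)\mu_{\phi_0}$. Linearity of the integral in the measure then gives
$$\hilb(h_{\phi_t})_{ij}=\int_X (s_i,s_j)_{h_0}\,d\mu_{\phi_t}=t(H_1)_{ij}+(1-t)(H_0)_{ij},$$
so $\hilb(h_{\phi_t})=tH_1+(1-t)H_0$, and $\hilb(\hcal)$ is convex.

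The main obstacle is not the convexity argument, which is immediate once the factorization is in place, but the bookkeeping needed to apply Aubin--Yau correctly: one must reconcile the background form appearing in the \ma equation with the curvature normalization $\omega=\frac i2\Theta_h$ used in the definition of $\hilb$ (the factor $\frac12$ in $\omega_0+\frac i2\ddbar\phi$ can be absorbed by rescaling the potential, at the cost of replacing $e^{-\phi}$ by $e^{-c\phi}$ for a fixed constant $c>0$, which does not affect the solvability), and one must verify that the twisted measure genuinely sweeps out every smooth positive density with no mass constraint. Once these normalizations are settled, the convexity of $\hilb(\hcal)$ follows directly from that of the cone of smooth positive measures.
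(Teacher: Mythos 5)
Your proposal is correct and follows essentially the same route as the paper: both reduce convexity to the fact that the twisted measure $e^{-\phi}(\omega+i\ddbar\phi)^n/n!$ is the only place $\phi$ enters, and both realize the convex combination of two such measures by solving $\log\MA(\phi)-\phi=f$ via Aubin--Yau. The only difference is presentational: you phrase it as a factorization through the cone of smooth positive measures, while the paper directly writes down the specific $f_t$ with $e^{f_t}\omega^n$ equal to the convex combination and solves for $\phi_t$.
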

\begin{proof}
	We fix a metric $h\in \hcal$ with $\omega=\frac{i}{2}\Theta_h$. Then $\hcal$ can be identified with the set $\hcal_h=\{\phi\in C^\infty(X)|\omega+i\ddbar \phi>0\}$. Let $H_0=\hilb(h_0)$ and $H_1=\hilb(h_1)$, with $h_0=he^{-\phi_0}$ and $h_1=he^{-\phi_1}$. For $t\in (0,1)$, we need to find $\phi_t$ such that $\omega+i\ddbar \phi_t>0$ and
	$$\int_X (s_i,s_j)_he^{-\phi_t}\frac{(\omega+i\ddbar \phi_t)^n}{n!}=(tH_1+(1-t)H_0)_{ij},$$
	namely
	$$\int_X (s_i,s_j)_he^{-\phi_t}\frac{(\omega+i\ddbar \phi_t)^n}{n!}=\int_X \frac{(s_i,s_j)_h}{n!}[te^{-\phi_1}(\omega+i\ddbar \phi_1)^n+(1-t)e^{-\phi_0}(\omega+i\ddbar \phi_0)^n]$$
	If we write $e^{f_t}\omega^n=te^{-\phi_1}(\omega+i\ddbar \phi_1)^n+(1-t)e^{-\phi_0}(\omega+i\ddbar \phi_0)^n$, then we only need to solve the \ma equation  $$\log\MA(\phi )-\phi =f_t,$$
	which, by the theorem of Aubin and Yau, has a unique solution $\phi_t\in  C^\infty(X)$ satisfying $\omega+i\ddbar \phi_t>0$. And the theorem is proved.
\end{proof}
 \
Fixing a basis $\{s_1,\cdots,s_m\}$, we denote by $\bar{\bcal}$ the closure of $\bcal$ in the space of $m\times m$ hermitian matrices, namely the space of positive semi-definite $m\times m$ matrices.

Now we prove theorem \ref{main}
\begin{proof}
	We prove that the closure of $\hilb(\hcal)$ in $\bar{\bcal}$ is $\bar{\bcal}$. Then by theorem \ref{thm-convex}, $\hilb(\hcal)$ is a convex dense subset of $\bcal$, which must be $\bcal$ itself.
	
	Clearly the closure of  $\hilb(\hcal)$ in $\bar{\bcal}$ is still convex. Given any basis $\{s'_1,\cdots,s'_m\}$, let $H_i\in \bar{\bcal}$ be defined as
	$$|s'_j|^2_{H_i}=\delta_{j}^i$$
	By unitary transformations, in order to show that the closure $\dcal$ of $\hilb(\hcal)$ in $\bar{\bcal}$ is $\bar{\bcal}$, it suffices to show that $H_i\in \dcal, 1\leq i\leq m$. We use the idea of Demailly from \cite{Demailly1993JDG}. Fix $i$, by the assumption on $V$, $\exists p_i\in X$ such that $s'_i(p_i)\neq 0$, while $s'_j(p_i)=0$ for $j\neq i$. Fix a local coordinate $(z_1,\cdots,z_n)$ in a neighborhood $U$ around $p_i$. $\forall \epsilon>0$, we can find a positive smooth function $f_\epsilon$ such that
	\begin{itemize}
		\item $\int_X f_\epsilon \omega^n=n!$
		\item $\int_{|z|<\epsilon} f_\epsilon \omega^n>n!(1-\epsilon)$
	\end{itemize}
So when $\epsilon\to 0$, $f_\epsilon \frac{\omega^n}{n!}$ converges to the delta measure $\delta_{p_i}$. Again, we can solve the \ma equation to get $\phi_\epsilon$ satisfying $\omega+i\ddbar \phi_\epsilon>0$ and $e^{-\phi_\epsilon}(\omega+i\ddbar \phi_\epsilon)^n=f_\epsilon\omega^n$. Let $a=|s'_i(p_i)|^2_h$ and $h_\epsilon=h e^{-\phi_\epsilon}$. Then as $\epsilon\to 0$, we have $$\hilb(h_\epsilon)\to aH_i.$$
By rescaling, we see that $H_i\in \dcal$. And we have finished the proof.
\end{proof}

We then prove theorem \ref{theo-points}

\begin{proof}[proof of theorem \ref{theo-points}]
We fix a hermitian metric $h$ on $L$ with corresponding \kahler form $\omega$. Then for each $x\in X$, we can approximate the point measure $\delta_x$ with volume forms $f_\epsilon \frac{\omega^n}{n!}$. Then again by the theorem by Aubin and Yau, we can find $\phi_\epsilon$ solving the equation $e^{-\phi_\epsilon}(\omega+i\ddbar \phi_\epsilon)^n=f_\epsilon \omega^n$. So $P_h(x)$ is contained in the closure of the image of $\hilb$ in $\bar{\bcal}$. Since the closure of the image of $\hilb$ is still convex, one sees that $\fcal$ is contained in the closure of the image of $\hilb$.

Conversely, $\forall h_\phi=he^{-\phi}\in \hcal$, we claim that we can find a sequence of measures $\{m_a\}$ which are all supported on sets of discrete points satisfying 
$$\lim_{a\to\infty}\int_X (s_i,s_j)_h m_a=\int_X (s_i,s_j)_he^{-\phi}w'^n,$$ for $1\leq i, j\leq m$. One way to do this is to use Shiffman-Zelditch's result in \cite{sz5}, where they proved that the normalized zero currents of sections in $H^0(X,L^k)$ almost surely converges to $\omega'=\omega+i\ddbar \phi$. When $n=1$, the zero currents of course are measures supported on sets of discrete points. When $n>1$, we can first approximate $\omega'$ with a sequence of currents represented by smooth hypersurfaces $\Sigma_b$, that is 
$$\lim_{b\to\infty}\int_{\Sigma_b} \alpha=\int_X \alpha\wedge \omega',$$
for all smooth $(n-1,n-1)$-forms $\alpha$. So $\forall \epsilon>0$, we can find a smooth hypersurface $\Sigma_b$ so that 
$$\sup_{1\leq i, j\leq m}|\int_X (s_i,s_j)_he^{-\phi}w'^n-\int_{\Sigma_b} (s_i,s_j)_he^{-\phi}w'^{n-1}|<\epsilon.$$
 Then we can use induction to find a measure $m_a$ supported on a set of discrete points in $\Sigma_b$ satisfying $$\sup_{1\leq i, j\leq m}|\int_{\Sigma_b} (s_i,s_j)_hm_a-\int_{\Sigma_b} (s_i,s_j)_he^{-\phi}w'^{n-1}|<\epsilon.$$ 
 So we can indeed approximate  $\hilb(h_\phi)$ by points in $\fcal$.
Therefore the image of $\hilb$ in $\bar{\bcal}$ is contained in $\fcal$.
\end{proof}

	\section{discussion of constraints}\label{sec-3}
	We now consider the linear constraints of the form
	$$\sum_{i=1}^{q}|\sigma_i|^2_{\hilb(h)}<M\sum_{i=1}^{l}|s_i|^2_{\hilb(h)}.$$
	The first observation is that we can assume that $(\sigma_1,\cdots,\sigma_q)$ and $(s_1,\cdots,s_l)$ are both linearly independent.
This is easy to see. For example, if $s_j\in <s_1,\cdots,s_\alpha>$, then after a unitary transformation, we can get $(s'_1,\cdots,s'_\alpha)$ linearly independent such that $\sum_{i=1}^{\alpha}|s_i|^2=\sum_{i=1}^{\alpha}|s_i'|^2$ and $s'_1=as_j$. So
$$|s_j|^2+\sum_{i=1}^{\alpha}|s_i|^2=|\sqrt{1+|a|^2}s_1'|^2+\sum_{i=2}^{\alpha}|s_i'|^2.$$
Then by induction, we can make $\sum_{i=1}^{l}|s_i|^2=\sum_{i=1}^{\alpha}|s_i'|^2$ with  $(s'_1,\cdots,s'_\alpha)$ linearly independent.

\

Assume $<s_1,\cdots,s_l>$ minimally generates $L$ or does not generate $L$.
If $<\sigma_1,\cdots,\sigma_q>\subseteq  <s_1,\cdots,s_l>$, then by theorem \ref{main}, the constraint \ref{constraints} is redundant.
Therefore we only need to consider the constraints given by the set of sections $(\sigma_1,\cdots,\sigma_q)$ such that  $<\sigma_1,\cdots,\sigma_q>$ is not contained in $<s_1,\cdots,s_l>$. Without this assumption on $<s_1,\cdots,s_l>$, it is not clear if we can ignore the constraints produced by those $\{\sigma_1,\cdots,\sigma_q\}$ satisfying $<\sigma_1,\cdots,\sigma_q>\subseteq  <s_1,\cdots,s_l>$.

\

In general, suppose that the max of $\frac{\sum_{i=1}^{q}|\sigma_i(p)|^2}{\sum_{i=1}^{l}|s_i(p)|^2}$ is attained at $p_0$.
We, by abuse of notation, let $s'_1(z)=\frac{\sum \bar{s_i}(p_0)s_i(z)}{\sqrt{\sum |s_i(p_0)|^2}}$. Of course, the vector $\frac{1}{|s_i(p)|^2}(\bar{s_1}(p_0),\cdots,\bar{s_l}(p_0))$ is independent of the choice of local frame. Then $\forall s(z)=\sum a_is_i(z)$ such that $\sum a_is_i(p_0)=0$, we have $s(p_0)=0$. Therefore, we can make an unitary transformation $(s_1,\cdots,s_l)\to (s'_1,\cdots,s'_l)$ so that $\sum |s_i(z)|^2=\sum |s'_i(z)|^2$ and $s'_i(p_0)=0$ for $i\geq 2$. So the linear constraints $\sum_{i=1}^{q}|\sigma_i|^2_{\hilb(h)}<M(|s'_1|^2_{\hilb(h)}+\sum_{i=2}^{l}\lambda_i|s'_i|^2_{\hilb(h)})$ for $\lambda_i\geq 1$ are reduced to one constraint
$$\sum_{i=1}^{q}|\sigma_i|^2_{\hilb(h)}<M\sum_{i=1}^{l}|s'_i|^2_{\hilb(h)}$$

\

Now we prove theorem \ref{thm-main}.
\begin{proof}[proof of theorem \ref{thm-main}]
	 The tangent space of $\hcal$ at a point $h$ is $C^{\infty}(X)$, and the tangent space of $\bcal$ at identity is the space of $m\times m$ hermitian matrices. We choose a basis $\{s_1,\cdots,s_m\}$ so that $\hilb(h)=I$. We then look at the tangent map of the Hilbert map
	$$(d\hilb_h(\phi))_{ij}=\int_X (s_i,s_j)_h(\Delta \phi-\phi)\frac{\omega^n}{n!}. $$
	Since the eigenvalues of $\Delta$ are nonpositive and discrete, by using eigenfunctions as orthornormal basis, one sees that $\Delta \phi-\phi=f$ has a solution for any $f\in C^{\infty}(X)$. Then we claim that the tangent map is surjective in this case. It suffices to prove that the functions $(s_i,s_j)_h$, $1\leq i,j\leq m$, are linear independent on $X$. When $n=1$, we can choose local coordinates $z$ around a point $p$ such that $z=0$. Then since the sections $(s_i)$ are linearly independent, after some linear transformations, we can assume that the vanishing orders of $s_i, 1\leq i\leq m$ at $p$ are strictly increasing as $i$ increases. Then it is easy to see that the functions $(s_i,s_j)_h$, $1\leq i,j\leq m$ are linearly independent. For $n>1$, we can choose a smooth curve passing through a point $p$ so that the sections $s_i, 1\leq i\leq m$ are still linearly independent on this curve. Then the same argument shows that the functions $(s_i,s_j)_h$, $1\leq i,j\leq m$ are linearly independent. So the tangent maps is surjective. Then the inverse function theorem shows that the image of $\hilb$ contains an open neighborhood of $I$. Since $h$ is arbitrary, we get that the image of $\hilb$ is open. 
\end{proof}

\begin{example}
	Let $X=\CP^1$ and $L=O(2)$. Then in homogeneous coordinates $Z=[Z_0,Z_1]$, we have three sections of $L$: $s_1=Z_0^2$, $s_2=\sqrt{2}Z_0Z_1$ and $s_3=Z_2^2$. Let $Q$ denote the hermitian metric on $H^0(X,L)$ that have $\{s_1,s_2,s_3\}$ as an orthornormal basis.  
	Then clearly, $Q\in \hilb(\hcal)$. 
	\begin{itemize}
		\item Let $Q_\epsilon\in \bcal$ be the metric whose matrix with $\{s_1,s_2,s_3\}$ as a basis is $\diag(1,\epsilon,1)$. We have $Q_\epsilon\in \acal$ for small $\epsilon$. To see this, one only need to mimic the proof of theorem \ref{main} to make the mass of the volume form $e^{-\phi}(\omega+\sqrt{-1}\ddbar \phi)$ to be equally concentrated around the two points $[0,1]$ and $[1,0]$. And then add a suitable scalar to $\phi$ to get $Q_\epsilon$.
		\item Let $Q'_\epsilon\in \bcal$ be the metric whose matrix with $\{s_1,s_2,s_3\}$ as a basis is $\diag(\epsilon,1,1)$. It seems that we should also have $Q_\epsilon\in \acal$ for small $\epsilon$.  But actually for $\epsilon$ small enough, we have $$Q_\epsilon\notin \hilb(\hcal).$$
		To see this, one just needs to notice that if $Q_\epsilon=\hilb(\phi)$ then the mass of the volume form $e^{-\phi}(\omega+\sqrt{-1}\ddbar \phi)$ must be concentrated in a small neighborhood of $[0,1]$. But this would imply that the norm of $s_2$ should also be small. So it is now clear that $Q'_\epsilon\notin \hilb(\hcal)$ for $\epsilon$ small enough. But actually $Q'_\epsilon\notin \acal$ for $\epsilon$ small enough. Clearly, $\{\epsilon^{-1/2}s_1,s_3\}$ generate $L$. We then see that the maximum of $\frac{|s_2|^2}{\epsilon^{-1}|s_1|^2+|s_3|^2}$ is $\sqrt{\epsilon}$. So we must have 
		$$|s_2|^2_{\hilb(h)}<\sqrt{\epsilon}(\epsilon^{-1}|s_1|^2_{\hilb(h)}+|s_3|^2_{\hilb(h)}),$$
		which is violated by $Q'_\epsilon$. 
		\item One might doubt that the difference of the two example comes from the fact that $\{s_2,s_3\}$ do not generate $L$. We can consider the basis $\{s_1,s_2+as_1,s_3\}$ instead. Then by theorem \ref{thm-main}, for $|a|$ small enough, there exists $P\in \hilb(\hcal)$ under which  
		$H^0(X,L)$ has $\{s_1,s_2+as_1,s_3\}$ as an orthornormal basis. Then $\{s_2+as_1,s_3\}$ does generate $L$. Let $P'_\epsilon\in \bcal$ be the metric whose matrix with $\{s_1,s_2+as_1,s_3\}$ as a basis is $\diag(\epsilon,1,1)$. Then we have $|s_2|_{P'_\epsilon}^2=1+|a|^2\epsilon$, which again violates the constraint violated by $Q'_\epsilon$.

	\end{itemize}
	
\end{example}
	\bibliographystyle{plain}

	\bibliography{references}
	
\end{document}